\definecolor{verylight}{gray}{0.97}
\definecolor{light}{gray}{0.9}
\definecolor{medium}{gray}{0.85}
\definecolor{dark}{gray}{0.6}
\def\frk{\frak}               
\def\Phi{{\frk n}}
\def\Phi{{\frk N}}
\def\bb{{\frk b}}
\def\cc{{\frk c}}
\def\MI{{\mathcal I}}
\def\Ic{{\mathcal I}}
\def\Dc{{\mathcal D}}
\def\bb{{\bold b}}
\def\ub{{\bold u}}
\def\opn#1#2{\def#1{\operatorname{#2}}} 
\opn\chara{char} \opn\length{\ell} \opn\pd{pd} \opn\rk{rk}
\opn\projdim{proj\,dim} \opn\injdim{inj\,dim} \opn\rank{rank}
\opn\depth{depth} \opn\grade{grade} \opn\height{height}
\opn\embdim{emb\,dim} \opn\codim{codim}
\opn\Tr{Tr} \opn\bigrank{big\,rank}
\opn\superheight{superheight}\opn\lcm{lcm}
\opn\trdeg{tr\,deg}
\opn\reg{reg} \opn\lreg{lreg} \opn\ini{in} \opn\lpd{lpd}
\opn\size{size} \opn\sdepth{sdepth}
\opn\link{link}\opn\fdepth{fdepth}\opn\lex{lex}
\opn\div{div} \opn\Div{Div} \opn\cl{cl} \opn\Cl{Cl}
\opn\Spec{Spec} \opn\Supp{Supp} \opn\supp{supp} \opn\Sing{Sing}
\opn\Ass{Ass} \opn\Min{Min}\opn\Mon{Mon}
\opn\Ann{Ann} \opn\Rad{Rad} \opn\Soc{Soc}
\opn\Im{Im} \opn\Ker{Ker} \opn\Coker{Coker} \opn\Am{Am}
\opn\Hom{Hom} \opn\Tor{Tor} \opn\Ext{Ext} \opn\End{End}
\opn\Aut{Aut} \opn\id{id}
\opn\nat{nat}
\opn\pff{pf}
\opn\Pf{Pf} \opn\GL{GL} \opn\SL{SL} \opn\mod{mod} \opn\ord{ord}
\opn\Gin{Gin} \opn\Hilb{Hilb}\opn\sort{sort}
\opn\ini{in}\opn{\rev}{rev}
\opn\aff{aff} \opn\con{conv} \opn\relint{relint} \opn\st{st}
\opn\lk{lk} \opn\cn{cn} \opn\core{core} \opn\vol{vol}
\opn\link{link} \opn\star{star}\opn\lex{lex}\opn\set{set}
\opn\gr{gr}
\def\pot#1#2{#1[\kern-0.28ex[#2]\kern-0.28ex]}
\opn\dirlim{\underrightarrow{\lim}}
\opn\inivlim{\underleftarrow{\lim}}
\let\to=\rightarrow
\def\Implies{\ifmmode\Longrightarrow \else
        \unskip${}\Longrightarrow{}$\ignorespaces\fi}
\def\implies{\ifmmode\Rightarrow \else
        \unskip${}\Rightarrow{}$\ignorespaces\fi}
\def\iff{\ifmmode\Longleftrightarrow \else
        \unskip${}\Longleftrightarrow{}$\ignorespaces\fi}
\newtheorem{Theorem}{Theorem}
\newtheorem{Lemma}[Theorem]{Lemma}
\newtheorem{Corollary}[Theorem]{Corollary}
\newtheorem{Proposition}[Theorem]{Proposition}
\newtheorem{Example}[Theorem]{Example}
\newtheorem{Definition}[Theorem]{Definition}
\let\epsilon\varepsilon
\let\kappa=\varkappa
\def\qed{\ifhmode\textqed\fi
      \ifmmode\ifinner\quad\qedsymbol\else\dispqed\fi\fi}
\def\textqed{\unskip\nobreak\penalty50
       \hskip2em\hbox{}\nobreak\hfil\qedsymbol
       \parfillskip=0pt \finalhyphendemerits=0}
\def\dispqed{\rlap{\qquad\qedsymbol}}
\opn\dis{dis}
\def\pnt{{\raise0.5mm\hbox{\large\bf.}}}
\opn\Lex{Lex}
\begin{document}

\title {Regularity of joint-meet ideals of distributive lattices}

\author {Viviana Ene,  Ayesha Asloob Qureshi, Asia Rauf}

\address{Viviana Ene, Faculty of Mathematics and Computer Science, Ovidius University, Bd.\ Mamaia 124,
 900527 Constanta, Romania, and
 \newline
 \indent Simion Stoilow Institute of Mathematics of the Romanian Academy, Research group of the project  ID-PCE-2011-1023,
 P.O.Box 1-764, Bucharest 014700, Romania} \email{vivian@univ-ovidius.ro}

\address{Ayesha Asloob Qureshi, The Abdus Salam International Center of Theoretical Physics, Trieste, Italy} \email{ayesqi@gmail.com}

\address{Asia Rauf, Department of Mathematics, Lahore Leads University, 5 Tipu Block New Garden Town, Lahore 54000, Pakistan} \email{asia.rauf@gmail.com}

\begin{abstract} Let $L$ be a distributive lattice and $R(L)$ the associated Hibi ring.
We compute $\reg R(L)$ when $L$ is  a planar  lattice and give  bounds for $\reg R(L)$ when $L$ is non-planar, in terms of the combinatorial data of $L.$  As a consequence, we characterize the distributive lattices $L$ for which the associated Hibi ring has a linear resolution.
\end{abstract}

\thanks{The first author was supported by the grant UEFISCDI,  PN-II-ID-PCE- 2011-3-1023.}
\subjclass{05E40,13D02,16E05}
\keywords{Binomial ideals, distributive lattices, regularity}
\maketitle

\section*{Introduction}

Let $L$ be a finite distributive lattice and $K[L]$  the polynomial ring over a field $K$. The {\em join-meet} or {\em Hibi ideal} of $L$, denoted $I_L,$ is generated by all the  binomials 
$f_{ab}= ab- (a \vee b)(a \wedge b)$ where  $a,b \in L$ are incomparable. The {\em Hibi ring} of $L$ is  $R(L)=K[L]/ I_L$. $R(L)$ is a 
Cohen-Macaulay normal domain  as it was shown in \cite{H}. Its properties were investigated in  \cite{H},  \cite{H2}, \cite{H3}.  The Gr\"obner bases of $I_L$ with respect to 
various monomial orders have been studied; see, for instance, \cite{AHH}, \cite{HH1}, \cite{H}, \cite{Q}. 

Our aim is to study the regularity of $I_L$ for a distributive lattice $L$. When $L$ is a planar lattice, we give the regularity formula  in Theorem~\ref{planar} in terms of the combinatorics of the lattice. For non-planar lattices, we show in Theorem~\ref{non-planar}  that $\reg R(L)$ is greater than or equal to the maximal number of pairwise incomparable join-irreducible elements minus $1$. These two results enable us to derive that $I_L$ has a $2$-linear resolution if and only if  $L$ is the divisor lattice of $2\cdot 3^a$ for some $a \geq 1$; see Corollary~\ref{linres}. For other nice properties of this lattice we refer to \cite{HH1}.

\section*{Main Results}

Let $L$ be a finite distributive lattice of rank $d+1$ where $d$ is a positive integer, and $K[L]$  the polynomial ring over a field $K$. Let $I_L$ be the join-meet ideal of $L$ and $R(L)=K[L]/I_L.$ 

Throughout this paper we assume that the lattice $L$ is {\em simple}, that is, it has  no cut edge. By a {\em cut edge} of $L$ we mean  a pair $(a,b)$ of elements of $L$ with $\rank(b) = \rank(a)+1$ such that
$$|\{c\in L\:\; \rank(c)=\rank(a)\}|= |\{c\in L\:\; \rank(c)=\rank(b)\}|=1.$$
In particular, a simple distributive lattice of rank $d+1$ has at least two elements of rank $1$ and at least two elements of rank $d$.

There is no loss of generality in making this assumption. Let us suppose that $L$ has a cut edge $(a,b).$ Then it is clear that $I_L=I_{L_1}+I_{L_2}$ where $L_1$ is the sublattice 
of $L$ consisting of all  elements $c\in L$ such that $c\leq a,$ and $L_2$ is the sublattice of $L$ consisting of all  elements $c\in L$ such that $c\geq 
b.$  Since $I_{L_1}$ and $I_{L_2}$ are ideals generated by binomials in disjoint sets of variables, we get $R(L)=R(L_1)\otimes R(L_2)$ which implies that 
$\reg R(L)=\reg R(L_1)+\reg R(L_2).$

 By Theorem 10.1.3 in \cite{HHBook}, 
we know that the generators of $I_L$ form a Gr\"obner basis of $I_L$ with respect to the reverse lexicographic order on $K[L]$. Consequently, the initial ideal 
of $I_L$ is generated by all the squarefree monomials $ab$ where $a,b\in L$ are incomparable elements. This implies that the Hilbert series $H_{R(L)}(t)$ of $R(L)$ coincides with the Hilbert 
series of the Stanley-Reisner ring $K[\Delta (L)]$ where $\Delta (L)$ is the order complex of $L$, that is, the simplicial complex whose facets are the maximal 
chains of $L$. In particular, $R(L)$ and $K[\Delta (L)]$ have the same $h$-vector $h_{R(L)}$. Since $R(L)$ is Cohen-Macaulay, we may choose in $R(L)$ a regular sequence of 
linear forms, 
$\ub=u_1,\ldots,u_{\dim R(L)}.$  Then $R(L)$ and $R(L)/\ub R(L)$ have the same $h$--vector. By \cite[Theorem 20.2]{P}, we have 
$\reg R(L)=\reg(R(L)/\ub R(L))$, and, since $\dim(R(L)/\ub R(L))=0,$ the regularity of $R(L)/\ub R(L)$ is given by the degree of its $h$--vector \cite[Exercise 20.18]{Ei}. Consequently, 
$\reg R(L)=\deg h_{R(L)}.$

 The coefficients of $h_{R(L)}=h_{K[\Delta(L)]}$ have a nice combinatorial interpretation which we are going to recall below.

Let $P$ be the subposet of $L$ of 
the join-irreducible elements. By Birkoff's Theorem, $L$ equals the distributive lattice $\MI(P)$ of all poset ideals of $P.$ If $|P|=d+1$ for some positive 
integer $d,$ then $\rank L=d+1$ and $\dim(R(L))=d+2.$






 By \cite{BGS} or \cite[Section 2]{RW},  we have
\begin{equation}\label{eq1}
h_{K[\Delta(L)]} (t)= \sum_{S \subset [d]} \beta (S) t^{|S|}
\end{equation}
where $\beta (S)$ is the number of the linear extensions of the poset $P$ whose descent set is $S$. We recall that if $\pi = (a_1, \ldots, a_{d+1})$ is a permutation of $[d+1]$, then the descent set of $\pi$ is defined by $\Dc (\pi) = \{i : \;  a_i > a_{i+1}\}$. 

By \cite[Section 2]{BGS}, the number $\beta (S)$ may be also interpreted as follows.
 Let $\lambda$ be an edge-labeling of $L$. This means that each edge $x \rightarrow y$ in the Hasse diagram of $L$ has a label
$\lambda (x \rightarrow y)$. Here $x \rightarrow y$ means that $y$ covers $x$ in $L.$ Then each chain in $L$, say $x_0 \rightarrow x_1 \rightarrow x_2 \rightarrow\cdots \rightarrow x_k,$ is  labeled
by the $k$-tuple $(\lambda(x_0 \rightarrow x_1), \ldots, \lambda (x_{k-1} \rightarrow x_k))$. We compare two such $k$-tuples, say
$(a_1, \ldots, a_k)$ and $(b_1, \ldots, b_k)$, lexicographically, that is, $(a_1, \ldots, a_k) >_{\lex} (b_1, \ldots, b_k)$ if the most-left nonzero component 
of the vector $(a_1-b_1,\ldots,a_k-b_k)$ is positive. 

\begin{Definition}[\cite{BGS}]\label{edgelab}
The edge-labeling $\lambda$ of $L$ is called an \textit{EL-labeling} if for every interval $[x,y]$ in $L$:
\begin{enumerate}
\item[{\em (i)}] there is a unique chain $\cc: x= x_0 \rightarrow x_1 \rightarrow \cdots \rightarrow x_k =y$ such that
$\lambda(x_0 \rightarrow x_1)\leq \lambda(x_1 \rightarrow x_2) \leq \ldots \leq \lambda (x_{k-1} \rightarrow x_k)$;
\item[{\em (ii)}] for every other chain $\bb: x=y_0 \rightarrow y_1 \rightarrow \cdots \rightarrow y_k=y$ we have $\lambda (\bb) >_{\lex} \lambda(\cc)$.
\end{enumerate}
\end{Definition}



For a maximal chain $\cc: \min L= x_0 \rightarrow x_1 \rightarrow \cdots \rightarrow x_{d+1}= \max L$ in $L$, we define the {\em descent set} $\Dc(\cc)=\{i \in [d] : \lambda (x_{i-1} \rightarrow x_i) > \lambda (x_i \rightarrow x_{i+1})\}$.

We recall now Theorem 2.2 in \cite{BGS}.

\begin{Theorem}\label{bgs}\cite{BGS} Let $L$ be a graded poset of rank $d+1.$
For $S \subset [d]$, $\beta(S)$ equals the number of maximal chains $\cc$ in $L$ such that $\Dc(\cc) = S$. 
\end{Theorem}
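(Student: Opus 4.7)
The plan is to prove Theorem~\ref{bgs} via Björner's theory of EL-shellability, realizing $\beta(S)$ as the flag $h$-vector of the order complex. The strategy is to exhibit a shelling of $\Delta(L)$ induced by the EL-labeling and to identify the restriction map of each maximal chain with its descent set; the conclusion then follows from the standard formula for the flag $h$-vector of a shellable complex.

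First I would verify that listing the maximal chains of $L$ in the lexicographic order of their label sequences is a shelling order of $\Delta(L)$. Given two chains $\cc' <_{\lex} \cc$, the EL-axioms are applied on the first subinterval where $\cc$ and $\cc'$ disagree: property~(ii) of Definition~\ref{edgelab} produces a chain $\cc''$ that is lexicographically earlier than $\cc$, differs from $\cc$ at a single vertex, and contains $\cc \cap \cc'$. This is exactly the shelling condition.

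Next I would identify the restriction $R(\cc)$ of each maximal chain $\cc: x_0 \to x_1 \to \cdots \to x_{d+1}$ as $\{x_i : i \in \Dc(\cc)\}$. For the inclusion $\supseteq$, at each descent position $i$ the length-two subchain $x_{i-1} \to x_i \to x_{i+1}$ is not the unique increasing chain in the interval $[x_{i-1},x_{i+1}]$; replacing $x_i$ by the middle vertex of that increasing chain yields a lexicographically earlier maximal chain containing $\cc \setminus \{x_i\}$, so $x_i \in R(\cc)$. For the reverse inclusion, at a non-descent position $i$, property~(i) of the EL-labeling forces $\cc$ to already be the unique increasing chain on $[x_{i-1},x_{i+1}]$, so no modification at position $i$ alone can produce a lexicographically earlier chain; hence $x_i \notin R(\cc)$.

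Once the restriction map is pinned down, the standard refined $h$-vector formula for shellable complexes yields $\beta(S) = \#\{\cc \text{ maximal chain of } L : \Dc(\cc) = S\}$ for each $S \subset [d]$, matching the ranks of the restriction vertices with the positions in $\Dc(\cc)$. The main technical obstacle is the restriction-map computation in the previous paragraph, since it demands a careful simultaneous use of both axioms of the EL-labeling to control local modifications; the remaining steps are routine applications of the EL-shellability machinery already developed in~\cite{BGS}.
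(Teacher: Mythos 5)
The paper offers no proof of this statement to compare against: it is recalled verbatim as Theorem 2.2 of \cite{BGS}. Your plan reconstructs the standard EL-shellability argument from that reference (lexicographic order on label sequences is a shelling, the restriction face of a maximal chain is indexed by its descent set, and the flag $h$-vector of a shellable balanced complex counts facets by the rank set of their restriction), so the architecture is the right one. Note only that the statement implicitly requires $L$ to carry an EL-labeling $\lambda$ (otherwise $\Dc(\cc)$ is undefined), and that to match the paper's definition of $\beta(S)$ as a count of linear extensions of $P$ you must also invoke the identification of that count with the flag $h$-vector of $\Delta(L)$; reading $\beta(S)$ directly as the flag $h$-vector silently absorbs that step.

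The genuine gap is in the restriction-map computation, specifically the inclusion $R(\cc)\subseteq\{x_i: i\in\Dc(\cc)\}$. You argue that at a non-descent position $i$ no modification at position $i$ alone yields a lexicographically earlier chain, hence $x_i\notin R(\cc)$. But that is not what has to be shown: $R(\cc)$ is the minimal face of $\cc$ contained in no earlier facet, so you must prove that the whole face $G=\{x_j: j\in\Dc(\cc)\}$ lies in no lexicographically earlier maximal chain, and such a chain could differ from $\cc$ at many consecutive ranks between two elements of $G$, not at a single vertex. The correct argument works gap by gap: between consecutive elements of $G\union\{\min L,\max L\}$ the labels of $\cc$ are weakly increasing, so by axiom (i) the restriction of $\cc$ to each such interval is the unique increasing chain there, hence lex-least among all chains of that interval by axiom (ii); consequently any other maximal chain containing $G$ is lex-greater than $\cc$, and $G$ is a new face. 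A smaller imprecision of the same kind occurs in your shelling verification: on the first subinterval where $\cc$ and $\cc'$ disagree, the chain $\cc$ may well be increasing (the first label discrepancy can occur in a later interval of disagreement); what one actually shows is that $\cc$ must fail to be increasing on \emph{some} interval of disagreement, since otherwise axiom (ii) would force $\cc<_{\lex}\cc'$, and the single-vertex replacement is then performed at a descent inside that interval.
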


\subsection*{Planar distributive lattices}

Let $\mathbb{N}^2$ be the infinite distributive lattice of all the pairs $(i,j)$ where $i,j$ are nonnegative integers. The partial order is defined as 
$(i,j)\leq (k,\ell)$ if $i\leq k$ and $j\leq \ell.$ A {\em planar distributive lattice} is a finite sublattice $L$ of $\mathbb{N}^2$ with $(0,0)\in L$ which has the following property: for any $(i,j), (k,\ell)\in L$ there exists a chain $\cc$ in $L$ of the form $\cc: x_0<x_1<\cdots <x_t$ with $x_s=(i_s,j_s)$ for 
$0\leq s\leq t,$ $(i_0,j_0)=(i,j)$, and $(i_t,j_t)=(k,\ell)$, such that $i_{s+1}+j_{s+1}=i_s+j_s+1$ for all $s.$

In the planar case, we may compute the regularity of $R(L)$ in terms of the cyclic sublattices of $L$. A sublattice of $L$ is called {\em cyclic} if it looks like in  Figure~\ref{cyclic} with some possible cut edges in between the squares. By a {\em square} in $L$ we mean a sublattice with elements $a,b,c,d$ such that 
$a\to b\to d$, $a\to c\to d$, and $b,c$ are incomparable.

\begin{figure}[hbt]
\begin{center}
\psset{unit=0.6cm}
\begin{pspicture}(1,-2)(5,5)
\rput(0,-1){
\rput(0,1){\pspolygon(2,2)(3,3)(4,2)(3,1)
\rput(2,2){$\bullet$}
\rput(3,3){$\bullet$}
\rput(4,2){$\bullet$}
\rput(3,1){$\bullet$}
}
\rput(0,3){\pspolygon(2,2)(3,3)(4,2)(3,1)
\rput(2,2){$\bullet$}
\rput(3,3){$\bullet$}
\rput(4,2){$\bullet$}
\rput(3,1){$\bullet$}
}
\psline(3,2)(3,1)
\rput(0,0){
\pspolygon(3,1)(2,0)(3,-1)(4,0)
\rput(3,1){$\bullet$}
\rput(2,0){$\bullet$}
\rput(3,-1){$\bullet$}
\rput(4,0){$\bullet$}
}
}
\end{pspicture}
\end{center}
\caption{Cyclic sublattice}
\label{cyclic}
\end{figure}
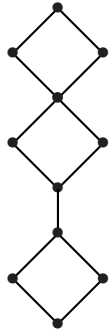

\begin{Lemma}\label{regcyclic}
Let $C$ be a cyclic lattice with $r$ squares. Then $\reg R(C)=r.$
\end{Lemma}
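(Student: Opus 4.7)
The plan is to verify that for any cyclic lattice $C$, the join-meet ideal $I_C$ is a complete intersection of $r$ quadrics, from which $\reg R(C) = r$ follows at once via the Koszul resolution.

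First I would observe that in a cyclic lattice the squares $S_1, \ldots, S_r$ are arranged linearly (possibly separated by cut edges), so that by transitivity through the shared top/bottom vertices and the cut-edge spines, every element of $S_i$ lies below every element of $S_j$ whenever $i < j$. Consequently the only incomparable pairs of $C$ are the two middle elements inside a single square, and $I_C$ is minimally generated by exactly $r$ binomials of degree two, one per square.

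Next I would count combinatorially: if $C$ has $r$ squares and $s$ cut edges, then $|C| = 3r + 1 + s$ and $\rank(C) = 2r + s$, so (by Hibi's theorem) $\dim R(C) = 2r + s + 1$ and $\height I_C = |C| - \dim R(C) = r$. Since $K[C]$ is a polynomial ring and $I_C$ is generated by $r$ elements of height $r$, these generators form a regular sequence, so $R(C)$ is a complete intersection of $r$ quadrics. The Koszul resolution then yields
\[
\reg R(C) \;=\; \sum_{i=1}^{r}(2-1) \;=\; r.
\]

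The only step requiring care is the incomparability claim above, namely that gluing squares via single vertices or via cut-edge spines never produces incomparable pairs across distinct squares (and, relatedly, that the counts of $|C|$ and $\rank(C)$ are tracked correctly in the presence of cut edges). Once this is in hand, the complete-intersection structure and the regularity computation are essentially formal.
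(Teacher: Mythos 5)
Your proof is correct, and its skeleton coincides with the paper's: both arguments reduce the lemma to the statement that $I_C$ is a complete intersection of $r$ quadrics and then read off $\reg R(C)=\sum_{i=1}^r(2-1)=r$ from the Koszul resolution. The only real difference is how the regular-sequence property is certified. The paper observes that $\ini_{\rev}(I_C)$ is generated by the $r$ monomials $bc$, one for each incomparable pair $\{b,c\}$ sitting in a square; since distinct squares have distinct middle vertices, these monomials involve pairwise disjoint variables and hence form a regular sequence, which forces the binomial generators themselves to be a regular sequence. You instead count: exactly $r$ incomparable pairs, hence $r$ generators, and $\height I_C=|C|-\dim R(C)=(3r+1+s)-(2r+s+1)=r$, so an ideal of height $r$ generated by $r$ elements in a polynomial ring is a complete intersection by unmixedness. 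Both verifications are sound; yours trades the Gr\"obner basis input (Theorem 10.1.3 of Herzog--Hibi, which the paper has already invoked anyway) for the dimension formula $\dim R(L)=\rank L+1$ of Hibi's theorem, and it does require the bookkeeping of $|C|$ and $\rank C$ in the presence of cut edges, which you carry out correctly. The one claim you flag as needing care --- that the only incomparable pairs are the two middle elements of each square --- is indeed the crux and holds exactly as you describe, since the top of each square is below the bottom of every later square through the chain of shared vertices and cut edges.
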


\begin{proof}
$I_C$ is generated by a regular sequence of length $r$ since $\ini_{\rev}(I_C)$ is generated by a regular sequence of monomials. Therefore, the Koszul complex of the generators of $I_C$ is the minimal free resolution of $R(C)$ over $K[C]$ and, hence, $\reg R(C)=r.$ 
\end{proof}

\begin{Theorem}\label{planar}
Let $L$ be a planar distributive lattice. Then $\reg R(L)$ equals the maximal number of squares in  a cyclic sublattice of $L.$
\end{Theorem}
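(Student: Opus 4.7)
Let $r_{\max}(L)$ denote the maximum number of squares in a cyclic sublattice of $L$, so the theorem asserts $\reg R(L)=r_{\max}(L)$.

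The starting point is the identity
\[
\reg R(L)=\deg h_{R(L)}=\max\{|S|:\beta(S)\neq 0\}=\max_{\cc}|\Dc(\cc)|,
\]
derived before the theorem from the Cohen--Macaulayness of $R(L)$ together with Theorem~\ref{bgs} applied to an EL-labeling of $L$. For planar $L\subseteq\NN^2$, I would use the EL-labeling coming from a linear extension of the poset $P$ of join-irreducibles of $L$ in which east covers receive smaller labels than north covers sharing a vertex; then the descents of a maximal chain $\cc$ correspond bijectively to its NE corners, i.e.\ the positions where $\cc$ takes a north step immediately followed by an east step. The theorem thus reduces to showing that the maximum number of NE corners over all maximal chains of $L$ equals $r_{\max}(L)$.

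\textbf{Lower bound.} Given a cyclic sublattice $C$ of $L$ with $r$ squares, the zigzag maximal chain of $C$ that turns NE in each of its $r$ squares (and crosses any intermediate cut edges) has $r$ NE corners. Prepending a saturated chain from $\min L$ to $\min C$ and appending one from $\max C$ to $\max L$ produces a maximal chain of $L$ with at least $r$ NE corners, giving $\reg R(L)\geq r$. Applied to a maximal cyclic sublattice this gives $\reg R(L)\geq r_{\max}(L)$.

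\textbf{Upper bound.} Let $\cc$ be a maximal chain realizing the maximum $k$ of NE corners, with middle vertices $y_j=x_{i_j}=(a_j,b_j)$ for $j=1,\ldots,k$. I would first show that for such a descent-maximizing $\cc$ every NE turn lies in a genuine square
\[
S_j=\bigl\{(a_j,b_j-1),\,(a_j+1,b_j-1),\,(a_j,b_j),\,(a_j+1,b_j)\bigr\}\subseteq L.
\]
The saturated-chain axiom forces each row and column of $L$ to be a contiguous interval of lattice points, so the only possible missing corner is the SE one $(a_j+1,b_j-1)$; if it were missing, the rank levels of $L$ around $y_j$ would be so constrained by simplicity that $\cc$ could be locally rerouted near $y_j$ to gain an additional NE corner, contradicting maximality of $k$. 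Since no NE corner occurs strictly between consecutive descents, the chain follows the pattern $E^{\ast}N^{\ast}$ between $y_j$ and $y_{j+1}$, so $\max S_j\leq\min S_{j+1}$ and the segment of $\cc$ joining them is a saturated monotone run. The sublattice of $L$ generated by $S_1,\ldots,S_k$ together with these connecting segments thus consists of the $k$ squares linked by chains along which each intermediate rank contains exactly one element; this sublattice is cyclic with $k$ squares, yielding $r_{\max}(L)\geq k=\reg R(L)$.

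\textbf{Main obstacle.} The crux of the argument is the local rerouting step in the upper bound: showing that in any descent-maximizing maximal chain of a simple planar $L$ each NE turn is inscribed in a true square of $L$. The simplicity hypothesis enters here in an essential way, and making this rerouting precise in terms of the planar embedding is the main combinatorial content of the proof.
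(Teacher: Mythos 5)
Your overall skeleton --- reduce via the $h$-vector and Theorem~\ref{bgs} to showing that the maximal size of a descent set of a maximal chain equals the maximal number of squares in a cyclic sublattice --- is the same as the paper's, and your lower bound is essentially the paper's. But the upper bound has a genuine gap, and it sits exactly where you flagged it. First, the asserted bijection between descents and NE corners does not follow from your labeling: requiring the east cover of a vertex to receive a smaller label than the north cover of the \emph{same} vertex says nothing about the comparison a descent actually makes, namely the label of the north edge \emph{into} a vertex versus the label of the east edge \emph{out of} it, and one easily produces NE corners that are not descents for such a linear extension. Second, and more seriously, the key lemma you propose --- that every NE turn of a descent-maximizing chain lies in a genuine square --- is false. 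Take $L=\{(0,0),(1,0),(0,1),(1,1),(0,2),(1,2),(2,2),(1,3),(2,3)\}\subset\NN^2$; this is a simple planar distributive lattice (rank levels $1$, $2$ and $4$ each contain two elements, so there is no cut edge). The maximal chain $(0,0)<(0,1)<(1,1)<(1,2)<(2,2)<(2,3)$ has step pattern $N,E,N,E,N$, hence two NE corners, and two is the maximum possible because every maximal chain of $L$ contains only two east steps; yet its NE corner at $(1,2)$ bounds no square, since $(2,1)\notin L$. So maximizing the number of NE corners does not force the squares to exist, no local rerouting can gain a third corner, and the cyclic sublattice you build from $S_1,\dots,S_k$ simply is not there. (The theorem itself survives: a different chain through $(0,2)$ realizes two descents whose squares do exist.)

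The paper's way around this is to choose the labeling more carefully rather than to argue about extremal chains. It labels the edges of the ``most upper'' maximal chain $\cc_0$ by $1,\dots,d+1$ and transports each label to all parallel edges of $L$ (same column for horizontal edges, same row for vertical ones). With this labeling one shows, inside the proof that it is an EL-labeling, that a descent can occur only at a lower (east-then-north) corner, and --- this is the decisive point --- that whenever a descent does occur there, the inequality between the two labels forces the corresponding two edges of $\cc_0$ to lie on the correct sides, which in turn forces the fourth vertex of the square to belong to $L$. Hence \emph{every} maximal chain, not just an extremal one, converts its descent set directly into a cyclic sublattice with $|\Dc(\cc)|$ squares; conversely the lower chain of a cyclic sublattice with $r$ squares has a descent at each of its $r$ lower corners. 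If you want to rescue your argument, you must build this ``descent implies square'' property into the labeling itself, as the paper does; it cannot be recovered afterwards by a maximality-and-rerouting argument.
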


In order to prove this theorem, we need some preparatory results. 

Let $L$ be a simple planar distributive lattice of rank $d+1$.  Let $\cc_0: x_0<x_1<\cdots<x_d< x_{d+1}$ be the chain of $L$ with $x_t=(i_t,j_t)$ for all 
$0\leq t\leq d+1$ and $(i_0,j_0)=(0,0), (i_{d+1},j_{d+1})=\max L$, having the following property: for any $(k,\ell)\in L$ with $k=i_t$ for some $t,$ we have $\ell\leq j_t.$ In other words, $\cc_0$ is the "most upper" chain of $L.$ We label the edges of $\cc_0$ by $\lambda(x_t\to x_{t+1})=t+1$ for $0\leq t\leq d.$ Next, we label all the edges in the Hasse diagram of $L$ as follows. If $i_{t+1}=i_t+1,$ in other words $x_t\to x_{t+1}$ is an horizontal edge, then we label by $t+1$ all the edges of $L$ of the form $(i_t,j)\to (i_{t+1},j)$. If $j_{t+1}=j_t+1,$ that is, $x_t\to x_{t+1}$ is a vertical edge, then we label by $t+1$ all the edges of $L$ of 
the form $(i,j_t)\to (i,j_{t+1})$.

\begin{Lemma}\label{unique}
Let $\cc: \min L=y_0<y_1<\cdots<y_{d+1}=\max L$ be an arbitrary maximal chain in $L$, $\cc\neq \cc_0.$ Then:
\begin{itemize}
	\item [(i)]  $\lambda(\cc)>_{\lex}\lambda(\cc_0).$
	\item [(ii)] there exists $q$ such that $\lambda(y_{q-1}\to y_q)>\lambda(y_q\to y_{q+1}).$
\end{itemize}
\end{Lemma}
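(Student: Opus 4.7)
The approach is to extract the structure of $\lambda(\cc)$ for an arbitrary maximal chain $\cc$. By construction, every horizontal edge in column transition $i\to i+1$ of $L$ receives the same label, namely $t+1$ where $t$ is the step at which $\cc_0$ traverses its (unique) horizontal edge in that column; analogously for vertical edges. Since $\lambda(\cc_0)=(1,2,\ldots,d+1)$, listing the labels of the horizontal edges of $\cc_0$ in column order as $H_0<H_1<\cdots$ and the labels of the vertical edges in row order as $V_0<V_1<\cdots$, we have $\{H_k\}\cup\{V_k\}=[d+1]$. Any maximal chain $\cc$ in $L$ traverses each column transition once and each row transition once, in the natural orders; hence $\lambda(\cc)$ is a shuffle of $(H_0,H_1,\ldots)$ with $(V_0,V_1,\ldots)$, and in particular a permutation of $[d+1]$.

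Part (ii) is now nearly automatic. If $\lambda(\cc)$ had no descent, being a permutation of $[d+1]$ it would be strictly increasing, hence equal to $(1,2,\ldots,d+1)$. But then a given label would appear at a horizontal step of $\cc$ precisely when it appears at a horizontal step of $\cc_0$, forcing the horizontal/vertical patterns to coincide, whence $\cc=\cc_0$, contradicting the hypothesis.

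For part (i), let $q$ be the smallest index with $y_q\neq x_q$. Since $L\subset\mathbb N^2$, the common element $x_{q-1}=y_{q-1}$ has at most two covers, so $\cc_0$ and $\cc$ diverge by taking its two distinct covers. The "most upper" property of $\cc_0$ forces it to take the vertical cover at $x_{q-1}$: if $\cc_0$ moved horizontally instead, the vertical cover would not lie in $L$ and $\cc$ would be forced to move horizontally as well, contradicting the choice of $q$. Hence $\lambda(x_{q-1}\to x_q)=q$, while $\lambda(y_{q-1}\to y_q)=t+1$, where $t$ is the step at which $\cc_0$ finally leaves column $i_{q-1}$; since $\cc_0$ is still in that column after step $q-1$, we have $t\geq q$, and so $\lambda(y_{q-1}\to y_q)\geq q+1$. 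Combined with $\lambda(y_{s-1}\to y_s)=\lambda(x_{s-1}\to x_s)$ for $s<q$, this yields $\lambda(\cc)>_{\lex}\lambda(\cc_0)$.

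The only delicate point is the claim that $\cc_0$ takes the vertical cover at the first divergence, which follows directly from the defining property of the most upper chain; no substantial obstacle is expected.
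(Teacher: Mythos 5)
Your proof is correct. For part (i) you follow essentially the same route as the paper: locate the first index where $\cc$ and $\cc_0$ diverge, observe that $\cc_0$ must take the vertical cover there, and compare the label of the horizontal edge taken by $\cc$ (which equals the label of the step at which $\cc_0$ \emph{leaves} that column, hence is strictly larger) with the label $q$ of the vertical step of $\cc_0$. You even supply a detail the paper waves at: the paper splits into the cases $i_{s-1}=i_s$ and $j_{s-1}=j_s$ and says the second is ``similar,'' whereas your observation via the most-upper property shows the second case is in fact vacuous, since the vertical cover would not lie in $L$. For part (ii), however, your argument is genuinely different. The paper exhibits an explicit descent: with $s$ the first divergence index it takes $q=\max\{t: t>s-1,\ \ell_t=\ell_{s-1}\}$ and checks directly that $\lambda(y_{q-1}\to y_q)\geq \lambda(y_{s-1}\to y_s)>\lambda(y_q\to y_{q+1})$. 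You instead use the global structure of the labeling: $\lambda(\cc)$ is a shuffle of the increasing sequences of column labels and row labels, hence a permutation of $[d+1]$, so descent-freeness forces $\lambda(\cc)=(1,\ldots,d+1)$, and since each label determines the orientation (and the exact column or row transition) of its edge, this forces $\cc=\cc_0$. Your version is cleaner and more structural; the paper's version has the side benefit of locating the descent at a specific lower corner, a picture that is reused in spirit in the proofs of Proposition~\ref{labeling} and Theorem~\ref{planar}, where descents are identified with lower corners of chains. Both arguments are complete.
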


\begin{proof} (i)
Since $\cc\neq \cc_0,$ we may choose $s=\min\{t: x_t\neq y_t\}.$ Let $x_t=(i_t,j_t)$ and $y_t=(k_t,\ell_t)$ for all $t.$ Assume that $i_{s-1}=i_s.$ The case 
$j_s=j_{s-1}$ can be treated in a similar way. Since $x_s\neq y_s$, we must have $k_s=i_{s-1}+1.$ Let $r=\max\{t: t>s-1, i_t=i_{s-1}\}$. Then 
$\lambda(y_{s-1}\to y_s)=\lambda(x_r\to x_{r+1})>\lambda(x_{s-1}\to x_s),$ which implies that $\lambda(\cc)>_{\lex}\lambda(\cc_0).$

For proving (ii),  we consider again the case $i_{s-1}=i_s$ and keep the notation of (i). Let $q=\max\{t: t>s-1, \ell_t=\ell_{s-1}\}. $ Then we get
\[
\lambda(y_q\to y_{q+1})=\lambda(x_{s-1}\to x_s)< \lambda(x_r\to x_{r+1})=\lambda(y_{s-1}\to y_s)\leq\lambda(y_{q-1}\to y_q).
\] The case $j_s=j_{s-1}$ can be done similarly.

\end{proof}

\begin{Proposition}\label{labeling}
The above defined edge-labeling of $L$ is an EL-labeling.
\end{Proposition}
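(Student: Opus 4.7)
My plan is to verify both conditions of Definition~\ref{edgelab} for every interval $[x,y]$ of $L$. The cornerstone observation is that the multiset of edge-labels along any maximal chain from $x$ to $y$ in $[x,y]$ is fixed: by the construction of the labeling, every vertical edge from row $j$ to $j+1$ of $L$ carries the same label (the position of the row-$j$ vertical step of $\cc_0$) and every horizontal edge from column $i$ to $i+1$ carries the same label; moreover, any maximal chain in $[x,y]$ uses exactly one vertical edge per row and one horizontal edge per column of the interval. These labels are pairwise distinct, since the labels assigned to the vertical and to the horizontal steps of $\cc_0$ form disjoint subsets of $\{1,\ldots,d+1\}$.

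Next I would introduce the ``upper'' maximal chain $\cc_{x,y}$ of $[x,y]$, defined analogously to $\cc_0$: at each column it ascends as high as possible within $[x,y]$, then moves right. This chain is well defined and unique since $[x,y]$ is itself a planar distributive lattice. The key claim is that $\lambda(\cc_{x,y})$ is strictly increasing, and I would prove it by a case distinction at each of its horizontal transitions. At the row $\rho_i$ where $\cc_{x,y}$ moves from column $i$ to column $i+1$, either $\rho_i$ is the top row of column $i$ in $L$---in which case the relative order of the horizontal label with respect to the surrounding vertical labels is exactly the one inherited from $\cc_0$---or $\rho_i$ equals the ceiling in the second coordinate imposed by $y$, in which case no further vertical edges appear in $\cc_{x,y}$ after this transition and sortedness is preserved trivially. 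In either case, the labels along $\cc_{x,y}$ form a subsequence of the increasing sequence $1,2,\ldots,d+1$ of labels of $\cc_0$.

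With $\lambda(\cc_{x,y})$ strictly increasing, condition~(i) is immediate, and condition~(ii) follows automatically: any permutation of a fixed set of distinct integers that differs from the sorted one is strictly lex-larger than the sorted arrangement, so every other maximal chain $\bb$ from $x$ to $y$ satisfies $\lambda(\bb)>_{\lex}\lambda(\cc_{x,y})$. The principal obstacle is the middle step---verifying that the ``go up first, then right'' recipe for $\cc_{x,y}$ produces labels in exactly the order in which they appear along $\cc_0$---which ultimately reduces to the planar structural fact that $\cc_{x,y}$'s transitions occur either at the top of the ambient column in $L$ (so $\cc_0$'s ordering applies) or at the top row of $[x,y]$ (beyond which no vertical steps remain).
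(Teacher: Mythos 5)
Your proof is correct, but it takes a genuinely different route from the paper's. The paper establishes the existence of the increasing chain in an interval $[x,y]$ by an iterative repair procedure: it shows that a descent can occur only at a lower corner of a chain, and replaces each offending lower corner by the opposite vertex of a square, checking via the position of $\cc_0$ that this vertex lies in $L$; uniqueness is then deduced by extending two increasing chains to maximal ones and invoking Lemma~\ref{unique}, and condition (ii) of Definition~\ref{edgelab} is verified by the same first-disagreement comparison as in that lemma. You instead observe that all maximal chains of $[x,y]$ carry the same multiset of pairwise distinct labels (one horizontal label per column and one vertical label per row of the interval), produce the increasing chain explicitly as the upper chain of $[x,y]$, and obtain condition (ii) for free, since any non-sorted arrangement of distinct integers is larger in the lexicographic order than the sorted one. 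This fixed-multiset observation is a genuine simplification, and your case analysis is sound: the labels of the upper chain of $[x,y]$ occur in the same relative order as along $\cc_0$. Two points should be made explicit, though. First, uniqueness in condition (i) is not quite ``immediate'': you need the (easy) remark that a saturated chain is determined by its starting point together with its label sequence, because each label names a specific column or row transition. Second, the well-definedness of the upper chain of $[x,y]$ --- that the greedy up-then-right path stays inside $L$ --- deserves the same care the paper gives to showing that the flipped corner lies in $L$; it follows from the connectivity axiom for planar distributive lattices (rows and columns of $L$ are intervals, and the horizontal edge at the top of a column of $L$ exists because $\cc_0$ uses it). Finally, note that the paper's corner-flipping argument is reused in the proof of Theorem~\ref{planar} to extract a cyclic sublattice from a chain with a prescribed descent set, so your shortcut here does not make that construction dispensable elsewhere in the paper.
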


\begin{proof}
Let $[x,y]$ be an interval of $L.$ We first prove condition (i) in Definition~\ref{edgelab}. In the first step, we show that, starting with an arbitrary chain 
$\cc$ from $x$ to $y$, we may find a chain $\gamma$ whose successive edges are labeled in increasing order. This shows the existence of the chain in (i). In 
the second step  we show the uniqueness.

 For an arbitrary chain $\cc: x=x_0=(i_0,j_0)\to x_1=(i_1,j_1)\to\cdots \to x_k=(i_k,j_k)=y$, we say that $x_t$ is an 
{\em upper corner} of $\cc$ if $j_t=j_{t-1}+1$ and $i_{t+1}=i_t+1.$ Similarly, $x_t$ is a {\em lower corner} of $\cc$ if $i_t=i_{t-1}+1$ and $j_{t+1}=j_t+1.$ It 
is almost obvious that if $x_t$ is not a corner or is an upper corner, than $\lambda(x_{t-1}\to x_t)<\lambda(x_t\to x_{t+1}).$ Indeed, if $x_t$ is not a corner, then the edges $x_{t-1}\to x_t$ and $x_t\to x_{t+1}$ are both either horizontal or vertical and, by the chosen labeling, we get 
$\lambda(x_{t-1}\to x_t)<\lambda(x_t\to x_{t+1}).$ Let now $x_t$ be an upper corner. We look at the edges $(i_t,k)\to (i_{t+1},k)$ and $(\ell, j_{t-1})\to (\ell, j_t)$ in the chain $\cc_0$. By the choice of $\cc_0,$ we have $\ell\leq i_t$ and $k\geq j_t$ which implies that $(\ell,j_t)\leq (i_t,j_t)\leq (i_t,k)$. Consequently, we get 
\[
\lambda(x_{t-1}\to x_t)=\lambda((\ell,j_{t-1})\to (\ell,j_t))<\lambda((i_t,k)\to (i_{t+1},k))=\lambda(x_t \to x_{t+1}).
\]
Let now $x_t$ be a lower corner of $\cc$ with $\lambda(x_{t-1}\to x_t)> \lambda(x_t \to x_{t+1}).$ We will replace $x_t$ in $\cc$ by $x_t^\prime=(i_t^\prime,j_t^\prime)$
where $i_t^\prime=i_{t-1}$ and $j_t^\prime=j_{t+1}.$ Now we need to explain that the edges $x_{t-1}\to x_t^\prime$ and $x_t^\prime\to x_{t+1}$ do appear in 
the Hasse diagram of $L.$ Let $(i_{t-1},j)\to (i_t,j)$ and $(i,j_t)\to (i,j_{t+1})$ be the edges of $\cc_0$ with the same labels as $x_{t-1}\to x_t$ and 
$x_t\to x_{t+1}$, respectively. As $\lambda(x_{t-1}\to x_t)> \lambda(x_t \to x_{t+1}),$ by the choice of $\cc_0,$ we must have $i\leq i_{t-1}$ and 
$j_{t+1}\leq j.$ Hence $x_{t-1}\to x_t^\prime$ and $x_t^\prime\to x_{t+1}$ are edges in $L.$

Now we look at the chain $\cc^\prime$ obtained from $\cc$ by replacing $x_t$ with $x_t^\prime.$ If it still has a lower corner, say $y_t$, with $\lambda(y_{t-1}\to y_t)> \lambda(y_t \to y_{t+1}),$ we replace 
$y_t$ by $y_t^\prime$ as we have done before in the chain $\cc.$ In this way, after finitely many such successive replacements, we get a new chain, say $\gamma,$ from $x$ to $y,$ whose edges are labeled in increasing order. 

For uniqueness, we proceed as follows. By Lemma~\ref{unique}, $\cc_0$ is the unique maximal chain of $L$ with the property that its edges are labeled in increasing order. Let us assume that we have $\gamma_1$ and $\gamma_2$ chains from $x$ to $y$ whose edges are labeled in increasing order. We extend these two chains to maximal chains in $L$, say $\Gamma_1$ and $\Gamma_2$. By suitable replacements of "bad" lower corners in $\Gamma_1$ and $\Gamma_2$ we reach the same maximal chain $\cc_0$. But these replacements do not affect $\gamma_1$ and $\gamma_2$, which implies that $\gamma_1=\gamma_2.$

Condition (ii) in Definition~\ref{edgelab} may be checked as in the proof of Lemma~\ref{unique} (ii).
\end{proof}

\begin{proof}[Proof of Theorem~\ref{planar}] Let $L$ be endowed with the above defined edge labeling and assume that the maximum number of squares in a cyclic 
sublattice of $L$ is $r.$
By Theorem~\ref{bgs} and equation (\ref{eq1}), we have to show that 
\[
r=\max\{|S|: \text{ there exists a maximal chain } \cc \text{ in }L \text{ with }\Dc(\cc)=S\}.
\]
Let $\cc: \min L=x_0<x_1<\cdots<x_{d+1}=\max L$ be a maximal chain in $L$ with $\Dc(\cc)=\{i_1,\ldots,i_m\}$. This means that for every $1\leq j\leq m,$ we have
\[
\lambda(x_{i_j-1}\to x_{i_j})> \lambda(x_{i_j}\to x_{i_j+1}).
\]
As we have already seen in the proof of Proposition~\ref{labeling}, $x_{i_1},\ldots.x_{i_m}$ must be lower corners of $\cc$ for which there exists 
$x^\prime_{i_1},\ldots,x^\prime_{i_m}\in L$ such that, for every $1\leq j\leq m,$ $x_{i_{j}-1}\to x^\prime_{i_j}$ and $x^\prime_{i_j}\to x_{i_j+1}$ are edges 
in the Hasse diagram of $L.$ Therefore, we get a sublattice $L^\prime$ of $L$ whose elements are the vertices of $\cc$ together with $x^\prime_{i_1},\ldots,x^\prime_{i_m}$ which is a cyclic sublattice with $m$ squares. Consequently, it follows that 
\[
r\geq\max\{|S|: \text{ there exists a maximal chain } \cc \text{ in }L \text{ with }\Dc(\cc)=S\}.
\] 

For the other inequality, let $L^\prime$ be a cyclic sublattice of $L$ which contains $r$ squares; see Figure~\ref{forproof}.

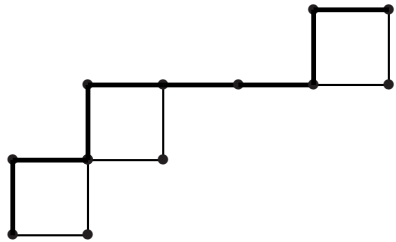
\begin{figure}[hbt]
\begin{center}
\psset{unit=1cm}
\begin{pspicture}(2,1)(5,4)

\pspolygon(1,1)(2,1)(2,2)(1,2)

\rput(1,1){$\bullet$}
\rput(2,1){$\bullet$}
\rput(2,2){$\bullet$}
\rput(1,2){$\bullet$}
\pspolygon(2,2)(3,2)(3,3)(2,3)
\rput(2,2){$\bullet$}
\rput(3,2){$\bullet$}
\rput(3,3){$\bullet$}
\rput(2,3){$\bullet$}
\psline[linewidth=1.8pt](3,3)(4,3) \rput(4,3){$\bullet$}
\psline[linewidth=1.8pt](4,3)(5,3)
\pspolygon(5,3)(6,3)(6,4)(5,4)
\rput(5,3){$\bullet$}
\rput(6,3){$\bullet$}
\rput(6,4){$\bullet$}
\rput(5,4){$\bullet$}
\psline[linewidth=1.8pt](1,2)(2,2)
\psline[linewidth=1.8pt](1,1)(1,2)
\psline[linewidth=1.8pt](2,2)(2,3)
\psline[linewidth=1.8pt](2,3)(3,3)
\psline[linewidth=1.8pt](5,3)(5,4)
\psline[linewidth=1.8pt](5,4)(6,4)
\end{pspicture}
\end{center}
\caption{The sublattice $L^\prime$}
\label{forproof}
\end{figure}

Let $\bb$ be the upper chain (drawn by the fat line in Figure~\ref{forproof}) in $L^\prime$ and $\cc$ the lower chain. Every lower corner in a square is a lower corner in $\cc$ which gives an element in the descent set $\Dc(\cc)$. Hence, 
\[
r\leq \Dc(\cc)\leq \max\{|S|: \text{ there exists a maximal chain } \cc \text{ in }L \text{ with }\Dc(\cc)=S\}.
\]
\end{proof}

\subsection*{Non-planar distributive lattices}
In the case of non-planar distributive lattices we give only  bounds for the regularity of the Hibi ring.

\begin{Lemma}\label{boole}
Let $B_n$ be the Boolean lattice of rank $n.$ Then $\reg R(B_n)=n-1.$
\end{Lemma}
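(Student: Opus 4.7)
The plan is to apply the formula $\reg R(L) = \deg h_{R(L)}$ established in the introductory discussion, together with the combinatorial interpretation of $h_{R(L)}$ via linear extensions coming from equation~(\ref{eq1}) and Theorem~\ref{bgs}.

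First I would identify the relevant poset of join-irreducibles. By Birkhoff's theorem $B_n = \MI(P)$, and one checks immediately that the join-irreducibles of $B_n$ are the $n$ atoms (the singletons), which form an antichain. Hence $P$ is an antichain with $|P| = n$, so in the notation of the paper $d+1 = n$, i.e.\ $d = n-1$. Since every bijection $[n] \to P$ is order-preserving, the set of linear extensions of $P$ is exactly the symmetric group $S_n$, and the descent set of a linear extension is the usual descent set of the corresponding permutation, which is a subset of $[n-1] = [d]$.

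Next I would combine equation~(\ref{eq1}) with this identification to obtain
\[
h_{R(B_n)}(t) = \sum_{S \subset [n-1]} \beta(S) \, t^{|S|} = \sum_{\pi \in S_n} t^{|\Dc(\pi)|},
\]
which is the classical Eulerian polynomial $A_n(t)$. Its degree is well known to be $n-1$, as realized by the reverse permutation $(n, n-1, \dots, 1)$, which has $n-1$ descents, while no permutation of $[n]$ can have more than $n-1$ descents. Since $\reg R(B_n) = \deg h_{R(B_n)}$ by the Cohen--Macaulay reduction argument recalled earlier, this gives $\reg R(B_n) = n-1$.

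There is essentially no obstacle here: the only thing to verify carefully is that $B_n$ really is simple for $n \geq 2$ (so the assumption in force in the paper applies), which follows because ranks $1$ and $n-1$ both contain $n \geq 2$ elements, and the small cases $n = 0, 1$ give $\reg R(B_n) = 0 = n-1$ trivially. Everything else is the routine identification of the $h$-vector of $R(B_n)$ with the Eulerian polynomial.
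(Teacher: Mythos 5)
Your argument is correct and follows essentially the same route as the paper: identify the join-irreducibles of $B_n$ as an $n$-element antichain, apply equation~(\ref{eq1}) so that $\reg R(B_n)$ is the maximal number of descents of a linear extension, and realize the maximum $n-1$ with the reverse permutation. The identification of $h_{R(B_n)}$ with the Eulerian polynomial and the check of simplicity are harmless extra detail not present in (but consistent with) the paper's proof.
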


\begin{proof}
Let $P=\{p_1,\ldots,p_n\}$ be the join-irreducible elements of $B_n.$ $P$ is an antichain, that is, $p_i$ is incomparable to $p_j$ for any $i\neq j.$ By using 
equation (\ref{eq1}), it follows that $\reg R(B_n)=\max\{|S|: \text{ there exists a linear extension of the poset } P \text{ whose descent set is  }S\}.$  As $P$ is an antichain, it follows that this maximum is $n-1,$ corresponding to the permutation $\pi$ of $P$ given by $\pi(p_i)=p_{n+1-i}$ for $1\leq i\leq n.$ Thus, 
$\reg R(B_n)=n-1.$
\end{proof}

\begin{Theorem}\label{non-planar}
Let $L=\Ic(P)$ be a non-planar distributive lattice. Then 
\small{
\[
|P|-1\geq\reg R(L)\geq \max\{|Q|: Q \text{ is a set of pairwise incomparable join-irreducible elements of }L\}-1.
\]}
\end{Theorem}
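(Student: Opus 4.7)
The plan is to reduce both inequalities to combinatorics of linear extensions via the identity $\reg R(L) = \deg h_{R(L)}$ established in the introduction together with equation (\ref{eq1}), yielding
\[
\reg R(L) = \max\bigl\{|S| : S \text{ is the descent set of some linear extension of } P\bigr\}.
\]

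The upper bound is then immediate: after fixing a natural labeling $\omega\colon P \to [d+1]$ (which exists as $P$ is finite), every linear extension of $P$ becomes a permutation of $[d+1]$, whose descent set is a subset of $[d]$. Hence $|S| \leq d = |P| - 1$ for every $S$ with $\beta(S) \neq 0$.

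For the lower bound, given an antichain $Q = \{q_1,\ldots,q_k\}$ of join-irreducibles, I would relabel so that $\omega(q_1) < \omega(q_2) < \cdots < \omega(q_k)$ and exhibit a single linear extension with at least $k-1$ descents. The recipe is to concatenate three blocks: first, any linear extension of the induced subposet $D := \{p \in P : p < q_j \text{ for some } j\}$; next, the list $q_k, q_{k-1}, \ldots, q_1$; last, any linear extension of $U := P \setminus (D \cup Q)$. Under $\omega$, the middle block reads as the strictly decreasing sequence $\omega(q_k) > \omega(q_{k-1}) > \cdots > \omega(q_1)$, producing $k-1$ consecutive descents, which is exactly the count needed.

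The main obstacle is verifying that this concatenation really is a linear extension of $P$. The delicate point is that no element of $U$ may lie strictly below any element of $D \cup Q$ in $P$. This is the key lemma: if $u \in U$ satisfied $u < q_j$ then $u \in D$ by definition, while if $u < d$ for some $d \in D$ then $d < q_j$ for some $j$ would force $u < q_j$, contradicting $u \in U$ in both cases. The remaining verifications are mild: within the middle block any order is valid since $Q$ is an antichain (no two $q_j$'s are comparable), and all constraints of the form $d < q_j$, $q_j < u$, or $d < u$ arising from $P$ are respected by the block ordering $D < Q < U$. Once this is checked, the lower bound follows by taking $k$ to be the maximal size of an antichain of join-irreducibles.
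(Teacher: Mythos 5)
Your proof is correct, but for the lower bound it takes a genuinely different route from the paper. Both arguments rest on the same reduction $\reg R(L)=\deg h_{R(L)}=\max\{|S|:\beta(S)\neq 0\}$, and the upper bound is handled identically. For the lower bound, however, the paper does not exhibit a linear extension directly: it refines the partial order on $P$ to a new order $\preceq$ in which $P'=\{p:p<q_i \text{ for some } i\}$ and $P''=\{p:p>q_j\text{ for some } j\}$ become chains sandwiching the antichain $Q$ (maximality of $Q$ ensuring $P=P'\cup Q\cup P''$), invokes Stanley's comparison $\beta_{(P,\leq)}(S)\geq\beta_{(P,\preceq)}(S)$ to pass to the lattice $L'=\Ic(P,\preceq)$, and then identifies $\reg R(L')$ with $\reg R(B_r)=r-1$ via Lemma~\ref{boole}. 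Your three-block concatenation $D,\ q_k\cdots q_1,\ U$ is precisely the extremal linear extension of $(P,\preceq)$ hiding inside that computation, so you have in effect inlined the paper's two citations into one explicit construction. What you gain is a self-contained, elementary argument that also applies verbatim to non-maximal antichains (your $U=P\setminus(D\cup Q)$ replaces the paper's up-set $P''$); what the paper's version buys is that the verification that the concatenation respects the order of $P$ is absorbed into the general machinery of order refinements. Two small points to tighten: (a) you should also note that no $q_j$ lies strictly below an element of $D$, which is immediate since $q_j<d<q_i$ would contradict $Q$ being an antichain (this also shows $D$ is an order ideal, so your first block makes sense); (b) the relabeling you perform must remain a \emph{natural} labeling of $P$, since formula (\ref{eq1}) is stated for natural labelings --- equivalently, just reindex the $q_j$ so that the original natural labeling increases along $Q$, which is always possible because $Q$ is an antichain.
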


\begin{proof} The first inequality is trivially true since, by equation (\ref{eq1}), $\deg h_{R(L)}\leq |P|-1.$ Let us prove the second inequality.

Let $Q=\{p_1,\ldots,p_r\}$ be a maximal set of pairwise incomparable join-irreducible elements of $L.$ It follows that for any other join-irreducible element 
$p\in P$ we have either $p<p_i$ for some $i$ or $p>p_j$ for some $j.$ On the set $P$ of join-irreducible elements of $L$ we consider a new order, $\prec$, defined  as follows: $\prec$ is a linear order on the set $P^\prime=\{p\in P: p< p_i \text{ for some }i\}$ and on the set $P^{\prime\prime}=\{p\in P: p> p_j
\text{ for some }j\}$ which extends the original order on $P,$ that is, $p< q $  implies $p\prec q.$ Moreover, we set $\max_{\prec}P^\prime \prec p_i \prec 
\min_\prec P^{\prime\prime}$ for all $1\leq i \leq r.$ By the definition of $\prec$, it follows that, for any $p,q\in P,$ if $p\leq q,$ then $p\preceq q.$ 
By using \cite[Proposition 15.4]{S2}, we get $\beta_{(P,\leq)}(S)\geq \beta_{(P,\preceq)}(S)$ for any $S\subset[d].$ Together with equation (\ref{eq1}), this implies that 
\begin{equation}\label{eq2}
\reg R(L)=\deg h_{K[\Delta(L)]}\geq \deg h_{K[\Delta(L^\prime)]}=\reg R(L^\prime),
\end{equation}
where $L^\prime$ is the distributive lattice of the poset ideals of $(P,\preceq).$ It is obvious by  the definition of $\prec$ that the regularity of $R(L^\prime)$ is equal to the regularity of $R(B_r)$ where $B_r$ is the Boolean lattice of rank $r.$ Therefore, Lemma~\ref{boole} and inequality (\ref{eq2}) lead to the desired inequality.
\end{proof}

The next example shows that both inequalities in Theorem~\ref{non-planar} may be strict.

\begin{Example}{\em
Let $P=\{p_1,p_2,p_3,p_4,p_5\}$ be the poset with 
$
p_1<p_4, p_2<p_4, p_2<p_5,p_3<p_5
$
 and $L=\Ic(P).$ Then $\reg R(L)=3$ and the maximal number of pairwise incomparable   elements of $P$ is equal to $3.$
}
\end{Example}

As a corollary of the above theorems, we may characterize the distributive lattices $L$ with the property that the Hibi ring  $R(L)$ has a linear resolution over the polynomial ring $K[L]$.

\begin{Corollary}\label{linres}
Let $L$ be a distributive lattice. Then $R(L)$ has a linear resolution if and only if $L$ is the divisor lattice of $2\cdot 3^a$ for some $a\geq 0.$
\end{Corollary}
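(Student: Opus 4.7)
Plan: The strategy is to reduce the corollary to a regularity bound: since $I_L$ is generated in degree $2$ (or equals $0$), $R(L)$ has a linear resolution if and only if $\reg R(L) \leq 1$. Both directions then follow from Theorems~\ref{planar} and~\ref{non-planar} combined with a short combinatorial classification of the simple planar lattices realising the bound.

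For the sufficient direction I would realise $L = \Dc(2 \cdot 3^a)$ as the $2 \times (a+1)$ grid in $\NN^2$. When $a = 0$, the lattice is a $2$-chain and $I_L = 0$, giving a trivial linear resolution. When $a \geq 1$, a direct inspection of the Hasse diagram shows that each element $(0, j)$ with $1 \leq j \leq a - 1$ has exactly one cover below and two covers above, while $(1, j)$ has two covers below and one above; the corner elements have degree $2$. No element of $L$ therefore serves simultaneously as the top of one diamond and the bottom of another, so every cyclic sublattice of $L$ contains at most one square. Theorem~\ref{planar} then yields $\reg R(L) = 1$.

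For the necessary direction I would first argue $L$ is planar. The proof of Theorem~\ref{non-planar} nowhere invokes non-planarity: via \cite[Proposition 15.4]{S2} it produces the bound $\reg R(L) \geq |Q| - 1$ for \emph{every} antichain $Q$ in the join-irreducible poset $P$. Since $L = \Ic(P)$ is non-planar exactly when $P$ contains a $3$-antichain (equivalently, $\Ic(P)$ embeds in $\NN^2$ iff the width of $P$ is at most $2$), the hypothesis $\reg R(L) \leq 1$ forces the width of $P$ to be at most $2$, so $L$ is planar. Theorem~\ref{planar} then tells us $L$ admits no cyclic sublattice with two or more squares.

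Finally I would embed $L \subseteq \NN^2$ with $\min L = (0, 0)$ and deduce $L = \Dc(2 \cdot 3^a)$. Simplicity produces the two atoms $(1, 0), (0, 1) \in L$ and hence $(1, 1) \in L$. Suppose, for contradiction, that $L$ contains some element with $x$-coordinate $\geq 2$ and some with $y$-coordinate $\geq 2$. I would argue $(2, 1), (1, 2) \in L$: if both corners $(2, 0)$ and $(0, 2)$ lie in $L$ this is immediate from $(2, 1) = (2, 0) \vee (1, 1)$ and $(1, 2) = (0, 2) \vee (1, 1)$; otherwise the chain property applied from $\max L$ downward through the lowest column hitting $y = 2$, combined with the fact that simplicity forbids cut edges near the top of $L$, forces the minimum $x$-coordinate on the row $y = 2$ to equal $1$, so $(1, 2) \in L$, and symmetrically $(2, 1) \in L$. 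But then $(1, 1)$ would have two covers above and two covers below, and the two squares $\{(0, 0), (1, 0), (0, 1), (1, 1)\}$ and $\{(1, 1), (2, 1), (1, 2), (2, 1) \vee (1, 2)\}$ would share only the vertex $(1, 1)$, producing a cyclic sublattice with two squares — contradicting Theorem~\ref{planar}. Hence $L$ lies in a two-column (or symmetrically a two-row) strip of $\NN^2$, and the chain property plus simplicity force $L$ to fill out the entire strip, yielding $L = \Dc(2 \cdot 3^a)$ for some $a \geq 0$. The main obstacle is this final combinatorial step, particularly the boundary case analysis when one of the corners $(2, 0), (0, 2)$ is absent from $L$, which requires a delicate use of the chain property together with the exclusion of cut edges near the top and bottom of $L$.
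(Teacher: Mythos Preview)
Your high-level strategy matches the paper's exactly: rule out the non-planar case via Theorem~\ref{non-planar}, then invoke Theorem~\ref{planar} to finish the planar classification. The paper is extremely terse at this last step (``the conclusion follows immediately''), and you have tried to supply the missing combinatorics. However, your argument for the ``otherwise'' case does not go through: the claim that the minimum $x$-coordinate on the row $y=2$ must equal $1$ can fail. Consider the simple planar lattice
\[
L=\{(0,0),(1,0),(0,1),(2,0),(1,1),(3,0),(2,1),(3,1),(2,2),(2,3),(3,2),(3,3)\}.
\]
Every internal rank has two elements, so $L$ is simple; but $(0,2)\notin L$ and $(1,2)\notin L$, the leftmost point at height $y=2$ being $(2,2)$. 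Your proposed second square $\{(1,1),(2,1),(1,2),(2,2)\}$ therefore does not exist in this $L$, and the case split collapses.

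The repair is shorter than the case analysis you attempted. Simplicity gives not only the bottom square on $(0,0),(1,0),(0,1),(1,1)$ but also, dually, a top square on $\max L=(m,n)$, its two coatoms $(m-1,n),(m,n-1)$, and their meet $(m-1,n-1)$. If $m,n\geq 2$ then $(1,1)\leq(m-1,n-1)$, and any saturated chain between them (guaranteed by the planar chain property) threads the two squares into a cyclic sublattice with two squares, contradicting $\reg R(L)\leq 1$. Hence $\min(m,n)\leq 1$; say $m=1$. The saturated chain property between $(0,0)$ and $(0,n)$, and between $(1,0)$ and $(1,n)$, then forces $L$ to be the full $\{0,1\}\times\{0,\ldots,n\}$ grid, i.e.\ the divisor lattice of $2\cdot 3^{n}$.
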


\begin{proof}
It is well known that if $L$ is the divisor lattice of $2\cdot 3^a$ for some $a\geq 0,$ then $R(L)$ has a linear resolution. Let now $L$ be a distributive lattice such that $R(L)$ has a linear resolution. If $L$ is non-planar, then it has at least three pairwise incomparable join-irreducible elements, thus 
$\reg R(L)\geq 2$, which is a contradiction to our hypothesis. Therefore, $L$ must be planar. In this case, the conclusion follows immediately as a consequence of  Theorem~\ref{planar}.
\end{proof}

{}

\end{document}